\documentclass[11pt,reqno]{amsart}
\usepackage[T1]{fontenc}
\usepackage{lmodern}
\usepackage[expansion=false]{microtype}
\usepackage{mathtools}
\usepackage{amssymb}
\usepackage{url}
\usepackage[hidelinks]{hyperref}
\newtheorem{theorem}{Theorem}[section]
\newtheorem{lemma}[theorem]{Lemma}
\newtheorem{corollary}[theorem]{Corollary}

\numberwithin{equation}{section}
\newcommand*{\doi}[1]{\href{https://doi.org/#1}{\nolinkurl{#1}}}
\title[A counterexample to Wormald's conjecture]{A counterexample to Wormald's conjecture}
\author{James Alexander Schreib}\thanks{The author's ORCID is \texorpdfstring{\url{https://orcid.org/0009-0005-1048-0209}}{0009-0005-1048-0209}.}
\address{New York University}
\email{jas10320@nyu.edu}
\date{6 September 2026}
\subjclass[2020]{05C70, 05C15}
\keywords{cubic graphs, linear forests, isomorphic partitions, formal verification}
\begin{document}
\begin{abstract}
We give a counterexample to Wormald's conjecture.
A 16-vertex cubic graph, the disjoint union of $K_{3,3}$ with a 10-vertex bridged graph built from two subdivided copies of $K_{4}$, has no partition of its edges into two isomorphic spanning linear forests.
The obstruction is parity: any such partition would force every monochromatic component to have even order while requiring a five-vertex side of a bridge to be partitioned into such components.
Adjoining copies of $K_{4}$ gives counterexamples in every order $16+4t$.
The counterexample is disconnected and leaves the connected case unresolved.
The full family is verified in Lean without custom axioms.
\end{abstract}
\maketitle
\section{Introduction}
\label{sec:intro}
The linear arboricity of a graph is the least number of linear forests needed to partition its edges.
Akiyama, Exoo, and Harary conjectured that this number is at most $\lceil (\Delta+1)/2 \rceil$ for maximum degree $\Delta$ \cite{akiyama1980}.
For cubic graphs this predicts a partition into two linear forests, proved by Akiyama, Exoo, and Harary \cite[Theorem~6]{akiyama1980}.
Wormald proposed a sharper form for orders divisible by four: the two forests can be taken spanning and isomorphic.
The conjecture appeared in 1987 as Problem~13 in Ars Combinatoria \cite{wormald1987}; see also Fouquet, Thuillier, Vanherpe, and Wojda \cite{fouquet2009} and Abreu, Goedgebeur, Labbate, and Mazzuoccolo \cite{abreu2019}.
Recent work of Kronenberg, Letzter, Pokrovskiy, and Yepremyan establishes the conjecture for all large connected cubic graphs \cite{kronenberg2022}.
The computations underlying Abreu et al.'s Corollary~4.6 and Observation~5.5 \cite{abreu2019} used snarkhunter, which generates connected cubic graphs \cite{brinkmann2011}; the disconnected counterexample below therefore lies outside the scope of that enumeration.
\begin{theorem}
\label{thm:main-intro}
There are finite simple cubic graphs of order divisible by four whose edges admit no partition into two isomorphic spanning linear forests.
\end{theorem}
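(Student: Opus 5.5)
The plan is to exhibit the 16-vertex graph $G=K_{3,3}\sqcup H$ described in the abstract and prove directly that it has no such partition. Here $H$ is built as follows. Take two copies of $K_4$ and subdivide one edge in each, giving two 5-vertex pieces, each with a single vertex of degree $2$. Then join the two degree-$2$ vertices by a bridge $e$. The resulting $G$ is simple, cubic, and of order $16$, which is divisible by four, so it establishes Theorem~\ref{thm:main-intro}. The argument is a counting argument that forces every path order to be even, followed by a parity contradiction at the bridge.

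\textbf{Step 1 (local structure).} Let $E(G)=F_1\cup F_2$ with $F_1\cong F_2$ spanning linear forests. Each forest has maximum degree $2$, and the two degrees at any vertex sum to $3$. Hence every vertex has degree $1$ in one forest and degree $2$ in the other. In particular, no component is an isolated vertex, and every vertex is an endpoint of a path in exactly one forest. Counting endpoints inside a single component $C$ of $G$ gives $2(c_1(C)+c_2(C))=|C|$, where $c_i(C)$ is the number of $F_i$-paths lying in $C$. For $K_{3,3}$ this gives $c_1+c_2=3$, and for $H$ it gives $c_1+c_2=5$. Since $F_1\cong F_2$, the total component counts agree, and $(3+5)/2=4$, so each forest has exactly four paths.

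\textbf{Step 2 (forcing even orders).} In $K_{3,3}$ each color has at least one path, so one color, say $F_1$, restricts to a Hamiltonian path $a_1b_1a_2b_2a_3b_3$. Its complement in $K_{3,3}$ consists of the path $b_2a_1b_3a_2$ together with the edge $a_3b_1$, so $F_2$ restricts to paths of orders $4$ and $2$. On $H$, therefore, $F_1$ has three paths with orders $x_1,x_2,x_3$ summing to $10$, and $F_2$ has two paths with orders $y_1,y_2$ summing to $10$. Isomorphism requires the multiset equality $\{6,x_1,x_2,x_3\}=\{4,2,y_1,y_2\}$. Since $6\neq 4,2$, some $y_i$ equals $6$, so the other equals $4$, and then $\{x_1,x_2,x_3\}=\{4,2,4\}$. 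In every case all eight path orders are even.

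\textbf{Step 3 (parity at the bridge).} The bridge $e$ lies in one forest, say $F_j$. Every path of the other forest $F_{3-j}$ that meets a 5-vertex side of $e$ cannot leave that side, because $e$ is the only edge leaving it. Since $F_{3-j}$ is spanning, these paths partition a set of $5$ vertices into paths of even order, which is impossible. This contradiction proves the theorem.

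The main obstacle is Step 2, namely making sure the exhaustive description of the $K_{3,3}$ restriction is correct. By symmetry of $K_{3,3}$, all Hamiltonian paths are equivalent, so it suffices to check the complement of one Hamiltonian path, but this check is the crux of the argument. The extension to orders $16+4t$ by adjoining copies of $K_4$ is not needed for the existence statement. It would require redoing the counting of Step 2 with the extra $K_4$ components, and I would leave it aside here.
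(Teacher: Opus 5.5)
Your proof is correct and follows the paper's argument: the degree lemma, the forced $P_6$ versus $P_4\sqcup P_2$ split on $K_{3,3}$, the resulting all-even inventory $\{6,4,4,2\}$, and the parity contradiction on a five-vertex side of the bridge. The only difference is cosmetic: you get the component counts from path endpoints, $|C|=2(c_1(C)+c_2(C))$, while the paper uses the forest identity (components $=$ vertices $-$ edges) together with the $12$-edge count, and both give the same numbers.
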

The witness is a graph $G$ on 16 vertices described in Section~\ref{sec:graph}.
Its analysis uses only degree counts, one complement computation on $K_{3,3}$, and a parity argument across a bridge.
Section~\ref{sec:family} extends $G$ to counterexamples of every order $16+4t$.
Section~\ref{sec:formal} records the complete Lean verification and its scope.
\section{Preliminaries}
\label{sec:prelim}
Terminology follows Diestel \cite{diestel2017}.
Graphs are finite and simple.
Write $P_{j}$ for the path on $j$ vertices, so $P_{1}$ is isolated and $P_{2}$ is a single edge.
A linear forest is a disjoint union of paths.
It is spanning in a graph $G$ if it contains every vertex of $G$, and a partition of $E(G)$ into two spanning linear forests is identified with a red-blue edge coloring in which each color class is a linear forest.
Two linear forests are isomorphic if and only if their multisets of component orders agree.
A bridge is an edge whose deletion increases the number of components, and subdividing an edge replaces it by a path of length two through a new vertex.
\section{The graph}
\label{sec:graph}
Take two disjoint copies of $K_{4}$.
Subdivide one edge of each copy by a new vertex, $x$ in the first copy and $y$ in the second, and add $e=xy$.
Call the result $H$.
Each subdivided $K_{4}$ contributes five vertices, so $H$ has ten vertices; it is cubic, and $e$ is a bridge whose deletion leaves two components of five vertices each.
Let $G$ be the disjoint union of $K_{3,3}$ and $H$.
Then $G$ is cubic on $6+10=16$ vertices with $9+15=24$ edges, and it contains a bridge.
The claim is that $G$ admits no partition into two isomorphic spanning linear forests.
\section{Degree and \texorpdfstring{$K_{3,3}$}{K3,3} analysis}
\label{sec:degree}
Suppose the edges of $G$ are colored red and blue so that each color is a spanning linear forest, and suppose the two forests are isomorphic.
\begin{lemma}
\label{lem:degree}
At each vertex, the two color degrees are $1$ and $2$ in some order.
In particular neither forest has an isolated vertex.
\end{lemma}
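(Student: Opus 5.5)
The plan is a direct degree count at a single vertex. It needs neither the isomorphism hypothesis nor any structure of $G$ beyond cubicity. Fix a vertex $v$ of $G$ and write $r(v)$ and $b(v)$ for the number of red and blue edges at $v$. The coloring partitions $E(G)$, and $G$ is cubic (Section~\ref{sec:graph}), so every edge at $v$ receives exactly one color. Hence $r(v)+b(v)=3$.

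Next I would bound each color degree from above. A linear forest is a disjoint union of paths, and every vertex of a path has degree at most $2$ in that path. So $r(v)\le 2$ and $b(v)\le 2$.

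Combining the two facts finishes the first assertion. If one of $r(v),b(v)$ were $0$, the other would be $3$, contradicting the bound. So both are at least $1$, and with sum $3$ and each at most $2$, the only possibilities are $(r(v),b(v))=(1,2)$ or $(2,1)$.

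For the second assertion, recall that the forests are spanning, so an isolated vertex of the red forest would be a vertex $v$ with $r(v)=0$, i.e.\ a component $P_{1}$. We have just shown that no such vertex exists, and the same holds for blue.

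There is no real obstacle here. The only point to state carefully is that ``spanning'' includes every vertex in each forest, so an isolated vertex corresponds exactly to color degree $0$. It is also worth noting explicitly that the argument does not use that the two forests are isomorphic.
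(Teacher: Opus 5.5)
Your proof is correct and matches the paper's argument: cubicity makes the two color degrees sum to $3$, and the linear-forest condition caps each at $2$, which leaves only the pairs $1+2$ and $2+1$ and so rules out isolated vertices. Your remark that the isomorphism hypothesis is not needed is accurate, since the paper's proof also uses only cubicity and the maximum-degree-$2$ property of paths.
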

\begin{proof}
The color degrees sum to $3$ and each is at most $2$, since a vertex of degree $3$ in a linear forest would contradict the path structure.
The only such pairs are $1+2$ and $2+1$.
\end{proof}
Since the forests are isomorphic and $G$ has 24 edges, each forest has 12 edges.
A forest with 12 edges spanning 16 vertices has $16-12=4$ components.
On the $K_{3,3}$ summand, let the restricted forests use $r$ and $b$ edges with $c_{R}$ and $c_{B}$ components.
Counting components in each restriction gives $c_{R}=6-r$ and $c_{B}=6-b$, while $r+b=9$.
Hence $c_{R}+c_{B}=12-9=3$.
Both counts are positive by Lemma~\ref{lem:degree}, so they are $1$ and $2$ in some order.
Exchange the colors so that red has one component on $K_{3,3}$: a $P_{6}$.
Write it as
\[
a_{1}-b_{1}-a_{2}-b_{2}-a_{3}-b_{3}.
\]
Its blue complement in $K_{3,3}$ is the $P_{4}$ on $b_{2}-a_{1}-b_{3}-a_{2}$ together with the $P_{2}$ on $a_{3}-b_{1}$.
Thus the restricted pair on $K_{3,3}$ is $P_{6}$ against $P_{4}\sqcup P_{2}$.
\section{Global inventory and parity}
\label{sec:parity}
The blue forest on $H$ has two components, since blue has four components globally and two lie on $K_{3,3}$.
Global isomorphism requires a blue $P_{6}$ matching the red $P_{6}$ on $K_{3,3}$.
That $P_{6}$ must lie on $H$, and its other blue component on the remaining four vertices of $H$ is a $P_{4}$.
Hence the full blue component multiset is
\begin{equation}
\label{eq:inv}
P_{6}\sqcup P_{4}\sqcup P_{4}\sqcup P_{2},
\end{equation}
and the red multiset agrees with it by isomorphism.
Every monochromatic component therefore has even order.
Now consider the color that omits the bridge $e$.
Every component of that color on $H$ lies wholly in one of the two five-vertex sides of $H-e$.
Its components meeting a fixed side partition those five vertices into even positive parts, impossible since a sum of even positive integers is even while $5$ is odd.
This contradiction establishes Theorem~\ref{thm:main-intro} for $G$ by \eqref{eq:inv}.
\section{Arbitrarily large counterexamples}
\label{sec:family}
\begin{corollary}
\label{cor:family}
For every integer $t\ge 0$, the disjoint union of $G$ with $t$ copies of $K_{4}$ is a cubic graph of order $16+4t$ with no partition into two isomorphic spanning linear forests.
\end{corollary}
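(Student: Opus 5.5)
The plan is to rerun the argument of Sections~\ref{sec:degree} and~\ref{sec:parity} on $G_{t}=G\sqcup tK_{4}$, treating each added $K_{4}$ as one more summand that can be analysed locally. $G_{t}$ is cubic on $16+4t$ vertices with $24+6t$ edges. Suppose it has a red--blue coloring whose color classes are isomorphic spanning linear forests. Lemma~\ref{lem:degree} is purely local, so it still applies: every vertex has color degrees $1$ and $2$, and no monochromatic component is a $P_{1}$. Each forest has $12+3t$ edges and hence $(16+4t)-(12+3t)=4+t$ components.

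First I analyse each $K_{4}$ summand by the same component count as for $K_{3,3}$. If red and blue use $r$ and $b$ edges there, with $r+b=6$, then $c_{R}+c_{B}=(4-r)+(4-b)=2$. Both counts are positive, so each color is a single spanning path of that $K_{4}$, namely a $P_{4}$. (Indeed the complement of a Hamiltonian path $a-b-c-d$ in $K_{4}$ is the path $c-a-d-b$.) Thus every $K_{4}$ contributes exactly one $P_{4}$ to each color.

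The $K_{3,3}$ analysis of Section~\ref{sec:degree} is unchanged, because it only used the restriction to $K_{3,3}$. After exchanging colors, red is a $P_{6}$ there and blue is $P_{4}\sqcup P_{2}$.

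Next comes the inventory step. Blue has $4+t$ components in total. Of these, $t$ lie on the $K_{4}$'s and two lie on $K_{3,3}$, so blue has exactly two components on $H$. Red contains a $P_{6}$, so by isomorphism blue must contain a $P_{6}$ as well. It cannot lie on a $K_{4}$ summand, which only carries $P_{4}$'s, nor on $K_{3,3}$, where blue is $P_{4}\sqcup P_{2}$. Hence this $P_{6}$ lies on $H$, and the other blue component on $H$ covers the remaining four vertices, so it is a $P_{4}$.

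The blue multiset is therefore $P_{6}\sqcup P_{4}\sqcup P_{4}\sqcup P_{2}$ together with $t$ further copies of $P_{4}$. Every order in it is even, and the red multiset coincides with it. The parity argument of Section~\ref{sec:parity} then applies verbatim to whichever color omits the bridge $e$. The components of that color meeting one five-vertex side of $H-e$ would partition $5$ into even positive parts, which is impossible.

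The only point needing care is the inventory step: one must check that the added $K_{4}$'s cannot absorb the matching $P_{6}$ or otherwise disturb the component count on $H$. This is settled by the observation above that each $K_{4}$ carries exactly one $P_{4}$ in each color.
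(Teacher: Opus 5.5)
Your proof is correct, and its key step matches the paper's: on each $K_{4}$ summand the component count $c_{R}+c_{B}=8-6=2$ forces exactly one $P_{4}$ in each color.

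The routes differ after that step. You rerun the inventory and parity arguments of Sections~\ref{sec:degree} and~\ref{sec:parity} on $G\sqcup tK_{4}$. This means checking by hand that the matching blue $P_{6}$ cannot sit on a $K_{4}$, and that blue still has exactly two components on $H$.

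The paper instead restricts both colors to $G$. This gives a partition of $E(G)$ into two spanning linear forests. Their component-order multisets are the global ones with $t$ copies of $4$ removed from each. Since the global multisets are equal, so are these, which contradicts the result already proved for $G$.

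The cancellation route is shorter and more general. It shows that adjoining copies of $K_{4}$ preserves the property for any cubic graph lacking such a partition, and it never needs to know where the $P_{6}$ lives. The inventory step you flag as the delicate point is exactly what cancellation sidesteps.

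Your route is fully self-contained and avoids the multiset-cancellation step. The price is that it is tied to the specific structure of $G$ and has to redo the bookkeeping on $H$.
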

\begin{proof}
In a linear forest partition of a $K_{4}$ summand, the restricted component counts sum to $8-6=2$.
Each is at least one, so both restrictions are $P_{4}$.
Each added $K_{4}$ therefore contributes the same path to each color.
Deleting these equal contributions from any hypothetically equal global path inventories would give a forbidden isomorphic partition of $G$.
\end{proof}
\section{Formal verification}
\label{sec:formal}
The concrete graph and full counterexample family are formalized in Lean~4.30.0 with Mathlib v4.30.0 \cite{moura2021,mathlib}, without custom axioms.
The graph $G$ is defined in \texttt{WormaldGraph.\allowbreak lean}, with bridge $10$--$15$.
The theorem \texttt{WormaldCounterexample.\allowbreak concrete\_\allowbreak counterexample} proves unconditionally that this graph is cubic, has order $16$, and admits no partition into two isomorphic spanning linear forests.

The proof checks the component-order inventories of all $512$ edge subsets of $K_{3,3}$ using a kernel-checked certificate checker, proves the forest edge/component identity and graph-sum inventory formulas, and derives the all-even inventory contradicting the odd bridge side.
Generated finite data is checked inside Lean; no external program is trusted.
The stronger theorem \texttt{WormaldGraph.\allowbreak no\_\allowbreak equal\_\allowbreak order\_\allowbreak partition} excludes even equal component-order multisets.
Corollary~\ref{cor:family} is proved for every $t\ge0$ by \texttt{WormaldGraph.\allowbreak family\_\allowbreak counterexample}, cancelling the component inventory $\{4\}$ contributed to each color by each added $K_4$.

The development is available at \url{https://github.com/jamesschreib/wormald-conjecture}.
Running \texttt{lake build} replays the verification, including the guarded checks in \texttt{AxiomAudit.\allowbreak lean}: the concrete and family theorems depend only on the standard foundational axioms \texttt{propext}, \texttt{Classical.choice}, and \texttt{Quot.sound}.
\section{Tool and computational resource disclosure}
ChatGPT~6 Astra (OpenAI) was accessed in chat to generate the proof. ChatGPT~5.6 Sol was accessed through Codex to write portions of the Lean~4 development described in Section~\ref{sec:formal}.\footnote{On the emerging conventions for such disclosure, see the Leiden Declaration on Artificial Intelligence and Mathematics~\cite{leiden2026}.}
All statements and proofs were subsequently verified independently by the author. The formal development is checked by the Lean kernel, and the Lean statements have been confirmed by hand to faithfully restate the corresponding statements of this paper. The author alone is responsible for the correctness of the arguments and for the accuracy and completeness of the citations.

\end{document}